\newcommand{\D}{\mathrm{d}}
\newcommand{\der}{\mathrm{der}}
\newcommand{\Aff}{\mathrm{Aff}}
\newcommand{\Stab}{\mathrm{Stab}}
\newcommand{\clos}[1]{\overline{#1}}
\newcommand{\til}{\widetilde}
\newcommand{\R}{\mathbb{R}}
\newcommand{\C}{\mathbb{C}}
\newcommand{\Z}{\mathbb{Z}}
\newcommand{\eps}{\varepsilon}
\newcommand{\Lie}[1]{\mathrm{#1}}
\newtheorem{theorem}{Theorem}
\newtheorem{lemma}{Lemma}[section]
\newtheorem{proposition}{Proposition}[section]
\theoremstyle{definition}
\newtheorem{example}{Example}
\theoremstyle{remark}
\newtheorem*{remark*}{Remark}
\newtheorem*{notation*}{Notation}
\numberwithin{equation}{section}
\begin{document}

\title{Finiteness conditions on translation surfaces}

\author{Joshua P.~Bowman}
\address{IMS, Department of Mathematics \\
Stony Brook University \\
Stony Brook, NY 11794}
\email{{\tt joshua.bowman@gmail.com}}

\subjclass[2010]{Primary 30F30. Secondary 57M50, 32G15, 14K20}

\date{}


\maketitle

\section*{Introduction}

Throughout this note, let $X$ denote a {\em translation surface}, i.e., 
a (connected) topological surface with a translation atlas. Then $X$ is 
automatically endowed with a conformal structure and a flat metric, and 
so it is both a Riemann surface and a Riemannian manifold \cite{jHhM79}. 
An orientation-preserving homeomorphism $\phi : X \to X$ is called 
{\em affine} if it is affine in local charts. We use $\Aff^+(X)$ to denote 
the group of affine maps of $X$. Any element $\phi$ of $\Aff^+(X)$ has 
a well-defined global {\em derivative} $\der\,\phi \in \Lie{GL}_2^+(\R)$. 
The image $\Gamma(X)$ of the homomorphism 
$\der : \Aff^+(X) \to \Lie{GL}_2^+(\R)$ is called the {\em Veech group} 
of $X$ \cite{wV89,yV96,cEfG97,eGcJ00}.

The existence of affine self-maps of a translation surface has applications 
in the study of mapping class groups, Teichm{\"u}ller theory, algebraic 
geometry, and dynamical systems (for a small sampling of such applications, 
see, e.g., \cite{wpT88,pHtS00,ctM03,mM06,cLaR06,ldM11}). They measure a 
kind of ``symmetry'' more general than that of isometries, which nonetheless 
has consequences for such systems as geodesic flow on the surface and 
geodesics in Teichm{\"u}ller space. Veech first observed the importance of 
the group of derivatives of affine maps \cite{wV89}.

Let $\clos{X}$ denote the metric completion of $X$. The classical study of 
translation surfaces assumes that $\clos{X}$ is itself a compact surface 
and $\clos{X} \setminus X$ is finite. If these conditions are satisfied, 
we will say that $X$ has {\em finite affine type}. Here we wish to consider 
four other ``finiteness'' conditions that may be placed on $X$:
\begin{enumerate}
\item\label{I:an} $X$ has {\em finite analytic type} as a Riemann surface, 
meaning that it is obtained from a compact Riemann surface by making 
finitely many punctures.
\item\label{I:ar} $X$ has {\em finite area} as a Riemannian manifold, 
meaning that the integral of the induced area form over all of $X$ is 
finite.
\item\label{I:bd} $X$ is {\em bounded} as a metric space, meaning that 
there exists a constant $M > 0$ such that $d_X(x,y) \le M$ for every pair 
of points $x$ and $y$ in $X$.
\item\label{I:tb} $X$ is {\em totally bounded} as a metric space, meaning 
that for any fixed $\eps > 0$, $X$ can be covered by finitely many balls 
of radius $\eps$ (equivalently, $\clos{X}$ is compact).
\end{enumerate}

We will prove two main results about these conditions, one negative and 
one positive.

\begin{theorem}\label{T:1}
Except for the trivial implication ``totally bounded $\implies$ bounded'', 
none of the conditions \eqref{I:an}--\eqref{I:tb} on $X$ implies any of 
the others. However, if $X$ has finite analytic type, then the other three 
conditions are equivalent and imply that $X$ has finite affine type.
\end{theorem}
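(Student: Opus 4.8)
The plan is to treat the two halves separately: the negative half by exhibiting explicit translation surfaces that realize the required combinations of properties, and the positive half by a local analysis of the abelian differential near the (finitely many) punctures. Throughout I write $\omega$ for the holomorphic $1$-form defining the translation structure, so that the flat metric is $|\omega|$ and the area form is $\tfrac{i}{2}\,\omega\wedge\conj{\omega}$. The guiding principle for the positive half is that finite analytic type confines all the interesting behavior to finitely many annular ends, each modeled on a punctured disk $D^*=\{0<|z|<1\}$ on which $\omega=f(z)\,\D z$ with $f$ holomorphic; everything then reduces to the growth of $f$ at $0$.

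For the positive half I first record the basic dichotomy. Expanding $f$ in its Laurent series and integrating in polar coordinates shows that $\int_{D^*}|f|^2\,\D A<\infty$ if and only if $f$ has no negative Laurent coefficients, i.e.\ if and only if $\omega$ extends holomorphically across the puncture. Hence, given finite analytic type, $X$ has finite area precisely when $\omega$ extends holomorphically at every puncture. In that case each puncture is filled by a single point --- a regular point, or a cone point of angle $2\pi(n+1)$ where $n$ is the order of vanishing of the extended $\omega$ --- so $\clos{X}$ is obtained from the compact Riemann surface $\hat{X}$ underlying $X$ by adjoining finitely many points. Thus $\clos{X}$ is compact and $\clos{X}\setminus X$ is finite, which is exactly finite affine type; in particular $X$ is totally bounded, and trivially bounded. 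This gives \eqref{I:ar} $\Rightarrow$ (finite affine type) $\Rightarrow$ \eqref{I:tb} $\Rightarrow$ \eqref{I:bd}.

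It remains to prove \eqref{I:bd} $\Rightarrow$ \eqref{I:ar}, equivalently that a non-removable singularity of $\omega$ at a puncture forces infinite diameter; this is the step I expect to be the main obstacle, because of essential singularities. The tool is the developing map $\Phi=\int\omega$, a local isometry to $\C$, for which every path $\gamma$ satisfies $\mathrm{length}(\gamma)\ge\bigl|\int_\gamma\D\Phi\bigr|$. If the residue $c=\mathrm{Res}_0 f$ vanishes, then $\Phi$ is single-valued on $D^*$ with a non-removable singularity, so its image on every neighborhood of $0$ is unbounded (it tends to $\infty$ at a pole, and is dense by Casorati--Weierstrass at an essential singularity); since $d_X(p,q)\ge|\Phi(p)-\Phi(q)|$, points near the puncture lie arbitrarily far from the core. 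If $c\ne 0$, I instead use the single-valued harmonic function $v=\Re(\Phi/c)=\log|z|+\Re(h/c)$, where $h$ is the remaining (single-valued) primitive; because $|\nabla v|=|f|/|c|$ one gets $d_X(p,q)\ge|c|\,|v(p)-v(q)|$, so it suffices to show $v$ is unbounded near $0$. Writing $z=e^w$ and expanding $h/c$ in powers of $z$, any nonzero negative Laurent coefficient of $h$ produces a term whose modulus grows exponentially in $|\Re w|$, dominating the linear term $\Re w=\log|z|$ and forcing $v$ to be unbounded; and if $h$ is in fact holomorphic at $0$ then $v=\log|z|+O(1)\to-\infty$. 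Either way $X$ is unbounded. Chaining the implications closes the cycle \eqref{I:ar} $\Leftrightarrow$ \eqref{I:bd} $\Leftrightarrow$ \eqref{I:tb} and yields finite affine type.

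For the negative half I would construct four surfaces realizing the property profiles $(1,0,0,0)$, $(0,1,0,0)$, $(0,0,1,0)$, and $(0,0,1,1)$, where the four slots record conditions \eqref{I:an}--\eqref{I:tb}; together these refute all eleven non-implications. The plane $\C$ (finite analytic type, infinite area, unbounded) gives the first. For the second, glue a sequence of flat tori of areas $2^{-n}$ along parallel slits so that they march off to infinity: the total area is finite, the genus is infinite, and the diameter is infinite. For the third, attach to a fixed square infinitely many ``flaps'' of bounded diameter but non-shrinking area, accumulating at one point, to obtain a bounded surface of infinite genus and infinite area that is not totally bounded. For the fourth, accumulate high-genus pieces of diameters $d_n\to 0$ but genera $g_n$ chosen so that $\sum g_n d_n^2=\infty$, giving a surface whose completion is compact yet whose area is infinite. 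The verifications are routine area integrals, diameter bounds, and covering-number estimates; the only point requiring care is checking that each plumbing genuinely yields a translation surface.
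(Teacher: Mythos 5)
Your overall architecture is sound and the proposal is essentially correct, but it takes a genuinely different route from the paper on both halves, so a comparison is in order. For the negative half the paper uses a single parametrized ``stack of boxes'' $X_{H,W}$ (rectangles of widths $w_n\downarrow 0$ and heights $h_n$ glued into a tower with all vertices identified to one point), for which the area is $\sum h_nw_n$, boundedness is equivalent to $H$ being bounded, and total boundedness is equivalent to $h_n\to 0$; the eleven non-implications then follow from five choices of $(H,W)$ plus the infinite cylinder $(\C^*,\D z/z)$. Your four ad hoc plumbings realize profiles that do cover all eleven cases, but each needs its own verification, and your second surface is under-specified as written: a chain of tori of areas $2^{-n}$ is \emph{bounded} unless the tori are made increasingly long and thin (small area but growing diameter), which is exactly what the paper's choice $h_n=n$, $w_n=1/n^3$ arranges in one stroke. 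For the positive half the paper simply extends the differential to the compactification (finite area implies holomorphic extension; boundedness rules out poles), whereas you work locally with the Laurent expansion and explicitly confront essential singularities; this is arguably the more honest route, since meromorphy at the punctures is not automatic, and your $L^2$ computation and your treatment of the zero-residue case are correct. The one step that is genuinely loose is the case of a nonzero residue $c$ together with an essential singularity of $h$: you cannot argue term by term that a negative Laurent coefficient of $h$ ``dominates'' $\log|z|$, because at an essential singularity individual terms do not control the sum, and $\Re(h/c)$ is in fact very negative on part of every small circle. The fix is short: set $g=e^{\Phi/c}=z\,e^{h/c}$, a single-valued holomorphic function on the punctured disk with $v=\log|g|$; if $v$ were bounded, $g$ would extend to a nonvanishing holomorphic function at $0$, making $\log z=\log g-h/c$ single-valued near the puncture, a contradiction. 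With that patch, and with the eccentricities of your glued pieces made explicit, your argument goes through.
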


\begin{theorem}\label{T:2}
Suppose the ideal boundary of $X$ is empty. If $X$ has at least one 
periodic trajectory and is totally bounded or has finite area, then its 
Veech group is a discrete subgroup of $\Lie{SL}_2(\R)$. However, there 
exist bounded surfaces and surfaces of finite analytic type with 
non-discrete Veech groups.
\end{theorem}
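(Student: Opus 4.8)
The plan is to establish the positive assertion first---where essentially all the content lies---and then to exhibit the two families of counterexamples. Write $\Gamma=\Gamma(X)$. First I would check that $\Gamma\subseteq\Lie{SL}_2(\R)$. Any $\phi\in\Aff^+(X)$ multiplies the flat area form by the positive constant $\det(\der\phi)$; if $X$ has finite area $A\in(0,\infty)$ then, $\phi$ being a bijection of $X$, we get $\det(\der\phi)\,A=A$, so $\det(\der\phi)=1$ and $\Gamma\subseteq\Lie{SL}_2(\R)$. In the totally bounded case I would first note that compactness of $\clos X$ together with emptiness of the ideal boundary forces the area to be finite, thereby reducing to the previous case.

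The engine of discreteness is the elementary fact that if $\Lambda\subset\R^{2}$ is discrete and contains two linearly independent vectors, then $\{\,g\in\Lie{GL}_2(\R):g\Lambda=\Lambda\,\}$ is a discrete subgroup: for if $g_{n}\to\id$ preserve $\Lambda$ and $v_{1},v_{2}\in\Lambda$ are independent, then $g_{n}v_{i}\to v_{i}$ inside the discrete set $\Lambda$, hence $g_{n}v_{i}=v_{i}$ for large $n$, and independence forces $g_{n}=\id$. So the plan is to produce such a $\Gamma$-invariant set $\Lambda$. The hypothesis supplies a periodic trajectory, which lies in a maximal cylinder $C_{0}$ with nonzero core holonomy $v_{0}$; since affine maps carry cylinders to cylinders and transform holonomy by the derivative, the set $\Lambda$ of holonomy vectors of maximal cylinders of the same area as $C_{0}$ is $\Gamma$-invariant and contains the orbit $\Gamma v_{0}$.

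The hard part will be showing that $\Lambda$ is discrete, and this is where both remaining hypotheses are indispensable. Emptiness of the ideal boundary ensures that $\clos X\setminus X$ consists of isolated cone points, so that every maximal cylinder is genuinely bounded by saddle connections and the notions of direction and holonomy are well defined. Finiteness then rules out accumulation: if holonomy vectors $w_{n}\to w\neq0$ came from distinct cylinders $C_{n}$ of the fixed area $a_{0}$, their circumferences $|w_{n}|$ and heights $a_{0}/|w_{n}|$ would be bounded above and below, so the $C_{n}$ would be embedded cylinders of uniformly definite size and modulus---and finiteness of the total area (equivalently here, total boundedness of $\clos X$) permits only finitely many of these, a contradiction. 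This volume/compactness estimate is the technical heart of the proof. With $\Lambda$ discrete, two cases remain. If some $\gamma\in\Gamma$ moves the direction of $v_{0}$, then $v_{0},\gamma v_{0}\in\Lambda$ are independent and the fact above applies. If instead every element of $\Gamma$ fixes the direction of $v_{0}$, then $\Gamma$ lies in the upper-triangular stabilizer of that line and discreteness follows directly, since the Dehn twist in $C_{0}$ constrains the parabolic parts to a discrete set while the diagonal parts act as a discrete group of scalings on $\Lambda\cap\R v_{0}$.

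For the negative statements I would exhibit explicit surfaces. The plane $X=\C$ with the form $\D z$ is a once-punctured sphere, hence of finite analytic type, and every map $z\mapsto Az+b$ with $A\in\Lie{GL}_2^+(\R)$ is affine, so $\Gamma(\C)=\Lie{GL}_2^+(\R)$ is visibly non-discrete---consistently, $\C$ carries no periodic trajectory, so the positive hypotheses fail. For a bounded surface with non-discrete Veech group I would build an infinite-type translation surface of finite diameter, its defining pieces shrinking geometrically to keep the diameter bounded, carrying two affine elements (for instance two parabolics in transverse completely periodic directions, or a parabolic together with an affine scaling) whose parameters are chosen to violate J{\o}rgensen's inequality, so that the subgroup they generate is non-discrete. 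The main obstacle throughout remains the no-accumulation estimate for $\Lambda$, precisely the place where infinite-type phenomena would otherwise destroy discreteness.
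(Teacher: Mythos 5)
Your overall architecture matches the paper's: reduce to the action of $\Gamma(X)$ on the set of holonomy vectors of maximal cylinders of a fixed area $A$ (nonempty by the periodic trajectory), prove that set is discrete, and control the stabilizer of a single vector. But the step you yourself identify as ``the technical heart'' contains a genuine gap. You argue that infinitely many distinct maximal cylinders $C_n$ of fixed area $a_0$ with holonomies $w_n \to w \ne 0$ would have uniformly definite size, and that finite area ``permits only finitely many of these.'' That inference fails: distinct maximal cylinders can overlap (think of the horizontal and vertical cylinders on any square-tiled surface), so infinitely many cylinders of definite area are perfectly compatible with finite total area. The missing ingredient is \emph{disjointness}: the paper proves (Lemma~\ref{L:2.2}) that two intersecting, non-coinciding maximal cylinders have core curves whose angle satisfies $|\tan\theta| > \min\{h_1/w_1, h_2/w_2\}$, and combines this with the elementary angle estimate of Lemma~\ref{L:2.3} to show that cylinders of area $A$ whose holonomies lie in a sufficiently small disk around $v_0$ must be pairwise disjoint --- only then does finite area (or total boundedness, via disjoint balls of definite radius) bound their number. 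Without some such geometric input your no-accumulation claim is unsupported.

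Two further problems. First, your reduction of the totally bounded case to the finite area case is unjustified: total boundedness does not imply finite area (the paper's Example~\ref{Ex:tbar}, $h_n = w_n = 1/\sqrt{n}$, is totally bounded with infinite area), you offer no argument that empty ideal boundary changes this, and the theorem's two hypotheses would be redundant if it did. The paper instead proves $\Gamma(X)\subset\Lie{SL}_2(\R)$ for totally bounded $X$ directly, via a Poincar\'e-recurrence/first-return-map argument (Lemma~\ref{L:2.1}); your later parenthetical treating the two hypotheses as ``equivalent here'' repeats the error. Second, in the stabilizer analysis you assert that ``the Dehn twist in $C_0$ constrains the parabolic parts to a discrete set''; this is exactly the content of the paper's Lemma~\ref{L:2.6} (the stabilizer of a finite-area maximal cylinder is cyclic), whose proof is where the empty-ideal-boundary hypothesis is actually used (to guarantee the boundary of $C_0$ contains saddle connections whose lengths must be preserved), and it needs to be proved, not asserted. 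Your example $(\C,\D z)$ for the finite-analytic-type claim is fine (the paper uses $(\C^*,\D z/z)$ instead), but the bounded counterexample is only a wish --- note also that violating J{\o}rgensen's inequality does not by itself certify non-discreteness --- whereas the paper simply unfolds an irrational rhombus, whose Veech group contains an irrational rotation.
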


\begin{remark*}
It is likely that the condition of having a periodic trajectory follows 
from the assumptions of having empty ideal boundary and being totally 
bounded or of finite area, in which case it can be dropped in 
Theorem~\ref{T:2}.
\end{remark*}

Translation surfaces of infinite analytic type appear, for example, 
in \cite{cEfG97,rCfGnL06,pHsLsT11,pHpHbW12,fV12,jpb12}, and it is 
such examples that motivated the study presented here. We will prove 
Theorem~\ref{T:1} in \S\ref{S:inequiv} and Theorem~\ref{T:2} in 
\S\ref{S:discrete}.

\section{Inequivalence of finiteness conditions}\label{S:inequiv}

We begin with the trivial, and only, implication among the finiteness 
conditions \eqref{I:an}--\eqref{I:tb}.

\begin{proposition}\label{P:1.1}
``$X$ is totally bounded'' $\implies$ ``$X$ is bounded''.
\end{proposition}
\begin{proof}
This is a generality about metric spaces. Pick $\eps > 0$, and cover $X$ 
with $N$ balls of radius $\eps$. Then the distance between any two points 
is at most $2N\eps$.
\end{proof}

The rest of the first part of Theorem~\ref{T:1} is proved through a series 
of examples. One general construction will be quite useful and flexible, 
so we describe it first and establish some notation.

\begin{example}[An infinite ``stack of boxes'']
Let $H = \{h_n\}_{n=1}^\infty$ be a sequence of positive numbers, 
and let $W = \{w_n\}_{n=1}^\infty$ be a strictly decreasing sequence 
of positive numbers tending to zero. Then we construct a translation 
surface $X_{H,W}$ as follows (see Figure~\ref{F:XHW}):
\begin{itemize}
\item For each $n \ge 1$, let $R_n$ be a rectangle with horizontal side 
$w_n$ and vertical side $h_n$.
\item Place the sequence of rectangles in the plane $\R^2$, starting with 
$R_1$ having its lower left corner at the origin, and with $R_{n+1}$ 
immediately above $R_n$ so that its left edge is along the $y$-axis.
\item Identify the right and left sides of each $R_n$ with each other via 
horizontal translation, and identify the portion of the top of $R_n$ not 
covered by $R_{n+1}$ (of length $w_n - w_{n+1}$) with the portion of the 
bottom edge of $R_1$ directly below via vertical translation. (We omit the 
vertices.)
\end{itemize}
The genus of $X_{W,H}$ is infinite, as can be seen by considering the 
(pairwise non-homotopic) horizontal core curves of the $R_n$. The area of 
$X_{H,W}$ is
\[
\mathrm{Area}(X_{H,W}) 
= \sum_{n=1}^\infty \mathrm{Area}(R_n) 
= \sum_{n=1}^\infty h_n w_n.
\]
In particular, the area of $X_{H,W}$ is finite if $H$ and $W$ are sequences 
in $\ell^2$, but this is not necessary. Let $\clos{X}_{H,W}$ denote the 
metric completion of this surface.
\end{example}

\begin{figure}
\includegraphics{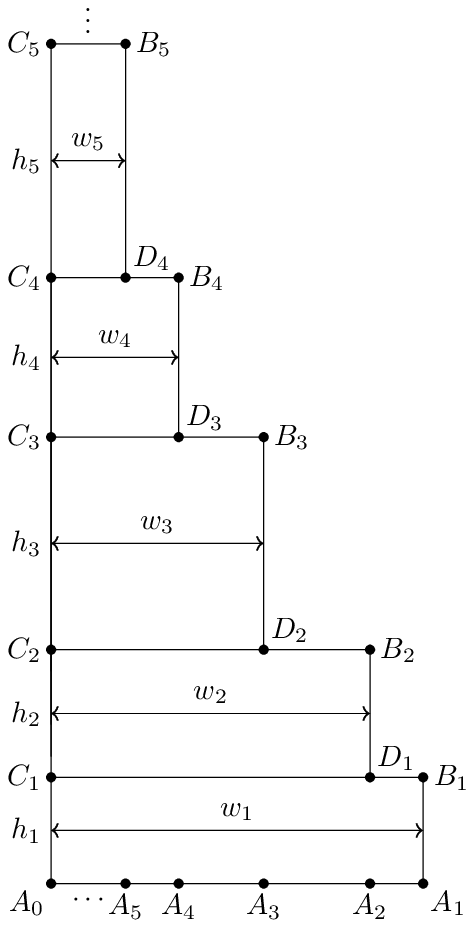}
\caption{A sample surface $X_{H,W}$. In addition to the identification of 
vertical edges, as indicated by the arrows, each segment $A_k A_{k+1}$ is 
identified with $B_k D_k$ via vertical translation.}
\label{F:XHW}
\end{figure}

\begin{lemma}
$\clos{X}_{H,W} \setminus X_{H,W}$ has only one point.
\end{lemma}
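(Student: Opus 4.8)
The plan is to identify the single completion point as the common limit of the omitted vertices, which accumulate as the widths $w_n$ shrink to zero. Writing $H_0 = 0$ and $H_n = h_1 + \cdots + h_n$, so that $R_n$ occupies $[0,w_n]\times[H_{n-1},H_n]$, I would first exhibit an explicit non-convergent Cauchy sequence to establish that $\clos{X}_{H,W}\setminus X_{H,W}$ is nonempty. Let $q_n$ be the midpoint of the subinterval $[w_{n+1},w_n]\times\{0\}$ of the bottom edge of $R_1$, which is the image under the overhang gluing of the top of $R_n$. Any two points $q_n, q_{n'}$ with $n \le n'$ lie on the bottom edge within the segment $(0,w_n]\times\{0\}$, and can be joined by a path (pushed slightly off the edge to avoid the omitted vertices $(w_k,0)$) of length at most $w_n$. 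Since $w_n\to 0$, the sequence $\{q_n\}$ is Cauchy, and it has no limit in $X_{H,W}$ because its only candidate limit is the omitted corner at the origin.

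For uniqueness I would argue in two steps. First, a bookkeeping step: tracing the edge identifications shows that all omitted vertices lie in a single equivalence class. Indeed, the left--right gluing of $R_n$ gives $(0,y)\sim(w_n,y)$, the overhang gluing of $R_n$ gives $(w_n,H_n)\sim(w_n,0)$ and $(w_{n+1},H_n)\sim(w_{n+1},0)$, and the shared horizontal edges give $(0,H_n)\sim(w_{n+1},H_n)$; chaining these produces $(0,0)\sim(w_1,0)\sim(w_1,H_1)\sim(0,H_1)\sim(w_2,H_1)\sim(w_2,0)\sim\cdots$, so every corner of every $R_n$ is identified with the origin, with infinite cone angle about the class.

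Second, a metric step. Since $X_{H,W}$ is a flat surface, it is locally isometric to $\R^2$ away from the omitted vertices, hence locally complete there, so any Cauchy sequence that does not converge in $X_{H,W}$ must accumulate onto the vertex set. It then suffices to show that every such sequence is equivalent to $\{q_n\}$. The key estimate is that a point lying within distance $\delta$ of an omitted vertex can be joined to some $q_n$ with $n$ large by a short path: one travels up the thin cylinder $R_n$ to its top overhang, via a horizontal shift of length at most $w_n$ followed by a vertical segment, and emerges on the bottom edge near the origin, close to $q_n$. Because the widths shrink, these connecting paths have length tending to zero as the sequence approaches the vertex set, forcing equivalence with $\{q_n\}$.

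I expect the metric step to be the main obstacle, the subtlety being to rule out additional ideal points. When $\sum h_n = \infty$ the surface is unbounded, and points deep in a tall cylinder $R_n$ are genuinely far from the completion point, so one must check that sequences escaping up the stack are not Cauchy because their mutual distances do not shrink; when $\sum h_n < \infty$ one must instead verify that sequences running toward the top of the stack still funnel, through the pinching cylinders, to the point $\{q_n\}$ rather than to a separate ideal point. Making the short-connecting-path estimate uniform enough to cover every non-convergent Cauchy sequence in both regimes is the crux.
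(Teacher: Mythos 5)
Your second step---the chain of identifications showing that every omitted vertex lies in a single equivalence class---is by itself essentially the entirety of the paper's proof: the paper reads this chain off from its figure and takes for granted that the points of $\clos{X}_{H,W}\setminus X_{H,W}$ correspond exactly to the omitted vertices. So the portion of your argument that is actually carried out is correct and agrees with the paper. Your first step is also fine, though it is more than is needed once the identifications are known: each $q_n$ lies on an edge segment of length $w_n-w_{n+1}$ whose endpoints are omitted vertices, so $d(q_n,s)\le w_n/2\to 0$ with no path-chasing.

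The metric step you single out as the crux is a legitimate concern that the paper passes over in silence, but your proposal does not close it, and the one concrete mechanism you offer would fail: the connecting path you describe ``travels up the thin cylinder $R_n$,'' which costs length on the order of $h_n$, and $h_n$ is not assumed small (Example~\ref{Ex:arid} takes $h_n=n$). Likewise, local completeness away from the vertices does not by itself force a non-convergent Cauchy sequence to accumulate on the vertex set; it could a priori escape up the stack, and ruling that out is precisely the missing estimate. Here is a way to finish that needs no case analysis on $H$. Let $(p_k)$ be Cauchy. If some $R_n$ contains a subsequence, lift it to the closed rectangle $[0,w_n]\times[H_{n-1},H_n]$, whose quotient map to $\clos{X}_{H,W}$ is $1$-Lipschitz with compact image contained in $X_{H,W}\cup\{s\}$; a convergent subsequence in $X_{H,W}\cup\{s\}$ results, and a Cauchy sequence with a convergent subsequence converges. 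Otherwise pass to a subsequence with $p_k\in R_{n_k}$ and the $n_k$ distinct. A path can leave a rectangle only across a horizontal edge, and $R_{n_k}\cap R_{n_j}=\varnothing$ for $k\ne j$, so $d(p_k,p_j)\ge\delta_k+\delta_j$, where $\delta_k$ denotes the distance from $p_k$ to the horizontal edges of $R_{n_k}$; the Cauchy condition forces $\delta_k\to 0$. Since each component of a horizontal edge of $R_{n_k}$ minus its omitted vertices has length at most $w_{n_k}$ and terminates at omitted vertices, $d(p_k,s)\le\delta_k+w_{n_k}\to 0$, so $p_k\to s$. Hence $\clos{X}_{H,W}=X_{H,W}\cup\{s\}$ in both of the regimes you worry about.
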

\begin{proof}
The translation structure has been defined by taking a quotient of the 
union of the rectangles $R_n$ except for their vertices. The vertices are 
all collapsed to a single point, as is evident in Figure~\ref{F:XHW}: 
using the notation of that figure, observe that $A_0 \sim A_1 \sim B_1 
\sim C_1 \sim D_1 \sim A_2 \sim B_2 \sim C_2 \sim D_2 \sim \cdots$.
\end{proof}

\begin{lemma}
$X_{H,W}$ is bounded if and only if $H$ is bounded.
\end{lemma}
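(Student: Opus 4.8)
The plan is to prove the two implications separately, in each case reducing statements about the diameter of $X_{H,W}$ to the single point $p_0 = \clos{X}_{H,W} \setminus X_{H,W}$ provided by the preceding lemma; recall that $p_0$ is the common image of all the rectangle vertices. Since $X_{H,W}$ is bounded if and only if the function $p \mapsto d_X(p,p_0)$ is bounded on $X_{H,W}$ --- a uniform bound $D$ gives $\mathrm{diam}(X_{H,W}) \le 2D$ by the triangle inequality, while if these distances are unbounded then, fixing any base point $x_0$, the estimate $d_X(p,x_0) \ge d_X(p,p_0) - d_X(x_0,p_0)$ shows $X_{H,W}$ is unbounded too --- it suffices to estimate $d_X(\,\cdot\,,p_0)$ from above and from below.

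For the direction ``$H$ bounded $\implies X_{H,W}$ bounded'', suppose $h_n \le M$ for all $n$. Any point $p$ of the rectangle $R_n$ lies at Euclidean distance at most $\tfrac12\sqrt{w_n^2 + h_n^2}$ from the nearest corner of $R_n$, and since $R_n$ is convex the straight segment joining them stays inside $R_n$ and projects to a path in $\clos{X}_{H,W}$ terminating at $p_0$. Using $w_n \le w_1$ and $h_n \le M$ this yields the uniform bound $d_X(p,p_0) \le \tfrac12\sqrt{w_1^2 + M^2}$, and hence $\mathrm{diam}(X_{H,W}) \le \sqrt{w_1^2 + M^2}$. I expect this half to be routine.

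The interesting direction is ``$H$ unbounded $\implies X_{H,W}$ unbounded'', and the main obstacle is the matching lower bound: I must show that the center $c_n$ of a tall rectangle $R_n$ really is far from $p_0$, i.e.\ that the edge identifications create no shortcut out of $R_n$. The key observation is that the open rectangle $R_n^\circ$, with its two vertical edges identified by horizontal translation, embeds isometrically in $X_{H,W}$ as a flat cylinder, so the vertical coordinate $y \in (0,h_n)$ is a well-defined continuous function on $R_n^\circ$. Any path from $c_n$, where $y = h_n/2$, to a point outside $R_n^\circ$ --- and $p_0$ lies outside $R_n^\circ$ --- must first reach the horizontal boundary $\{y = 0\} \cup \{y = h_n\}$, at which moment its vertical displacement is already $h_n/2$, so its length is at least $h_n/2$; thus $d_X(c_n,p_0) \ge h_n/2$. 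The only point needing care here is the claim that leaving $R_n^\circ$ forces a crossing of its horizontal boundary, which I would justify by taking the first exit time of the path and noting that the path lies in the closed cylinder up to that instant, where $y$ has changed by $h_n/2$. If $H$ is unbounded I then choose a subsequence with $h_{n_j} \to \infty$, so that $d_X(c_{n_j},p_0) \ge h_{n_j}/2 \to \infty$, and by the reduction of the first paragraph $X_{H,W}$ is unbounded.
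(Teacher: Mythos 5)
Your proof is correct and takes essentially the same route as the paper: for one direction, bound every point's distance to the singular point $p_0$ by half a diagonal of its rectangle, and for the other, observe that the centers of tall rectangles are far from $p_0$. The only difference is that you supply the vertical-coordinate/first-exit argument justifying the lower bound $d(c_n,p_0)\ge h_n/2$, which the paper merely asserts.
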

\begin{proof}
Suppose $H$ is bounded by $M_H$ and $W$ by $M_W$. Then every point of 
every rectangle $R_n$ is within $M = \sqrt{{M_H}^2 + {M_W}^2}$ of a corner. 
Since the vertices are identified to a single point in $\clos{X}_{H,W}$, 
$2M$ is an upper bound for the distance between any two points of $X_{H,W}$.

Now suppose $H$ is not bounded. Then the centers of the rectangles $R_n$ 
become arbitrarily far from the vertices, and so $X_{H,W}$ is not bounded.
\end{proof}

\begin{lemma}
$X_{H,W}$ is totally bounded if and only if $H$ tends to zero.
\end{lemma}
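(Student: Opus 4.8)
The plan is to prove both implications by exploiting the single-point completion established in the previous lemma, together with the elementary fact that every point of a $w_n \times h_n$ rectangle lies within $\tfrac{1}{2}\sqrt{w_n^2 + h_n^2}$ of its nearest corner (the extremal case being the center). Since all corners are identified to the one point $p_\infty = \clos{X}_{H,W} \setminus X_{H,W}$, this says that every point of $R_n$ is within $\tfrac{1}{2}\sqrt{w_n^2+h_n^2}$ of $p_\infty$. I will use the equivalence recorded in condition \eqref{I:tb}, that $X_{H,W}$ is totally bounded if and only if $\clos{X}_{H,W}$ is compact, and freely use that total boundedness only requires, for each $\eps>0$, \emph{some} finite cover by $\eps$-balls.

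For the \emph{sufficiency} direction, suppose $H \to 0$. Because $W \to 0$ as well, the quantity $\tfrac{1}{2}\sqrt{w_n^2+h_n^2}$ tends to $0$, so there is an $N$ beyond which it is $< \eps$. Then every rectangle $R_n$ with $n > N$ is contained in the $\eps$-ball about $p_\infty$, hence (after nudging the center slightly into $X_{H,W}$) in a single $2\eps$-ball. The finitely many remaining rectangles $R_1,\dots,R_N$ are each compact, so they are covered by finitely many $\eps$-balls. Assembling these gives a finite cover of $X_{H,W}$ by balls of radius $2\eps$, and since $\eps$ was arbitrary this proves total boundedness.

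For the \emph{necessity} direction I would argue the contrapositive: if $H$ does not tend to $0$, there are $\delta > 0$ and infinitely many indices $n$ with $h_n \ge \delta$. For each such $n$ let $q_n$ be the center of $R_n$, sitting at mid-height $h_n/2$. The claim is that $d(q_n, q_m) \ge \delta/2$ whenever $n \ne m$: since $q_m$ lies in a different rectangle, any path from $q_n$ to $q_m$ must leave $R_n$, and because the two vertical sides of $R_n$ are glued to each other, the only way out is across the top or bottom horizontal edge. Reaching either edge forces a vertical displacement of at least $h_n/2$, so the path has length at least $h_n/2 \ge \delta/2$. Thus $\{q_n\}$ is an infinite $(\delta/2)$-separated set, which no finite collection of balls of radius $\eps < \delta/4$ can cover; hence $X_{H,W}$ is not totally bounded.

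The main obstacle is making the lower bound $d(q_n,q_m)\ge h_n/2$ rigorous, i.e., justifying that a shortest path out of $R_n$ really must cross a horizontal edge and therefore accumulate vertical displacement $h_n/2$. This requires care with the developing map: within $R_n$ the vertical coordinate is well defined, crossings of the vertical edges return a path to $R_n$ rather than letting it escape, and the length of any path dominates the total variation of its vertical coordinate up to the first exit. Once this local-coordinate bookkeeping is in place, both directions follow quickly.
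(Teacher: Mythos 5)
Your proof is correct. The sufficiency direction is essentially the paper's argument: both use that $H\to 0$ and $W\to 0$ force the tail of rectangles into a small neighborhood of the single completion point, and both dispose of the finitely many remaining rectangles by compactness (your explicit bound $\tfrac12\sqrt{w_n^2+h_n^2}$ and the nudge of the center into $X_{H,W}$ are slightly more careful than the paper's). The necessity direction, however, takes a genuinely different route. The paper works with the equivalent formulation ``$\clos{X}_{H,W}$ is compact'' and exhibits an explicit open cover of the completion (an $\eps_0$-ball about the singular point, the interiors of the tall cylinders, and collars around the edges between consecutive rectangles) admitting no finite subcover. You instead argue directly from the definition of total boundedness by producing an infinite $(\delta/2)$-separated set, namely the centers of the infinitely many rectangles with $h_n\ge\delta$; the key estimate $d(q_n,q_m)\ge h_n/2$ is justified correctly, since the vertical side identifications keep a path inside the cylinder $R_n$, so escaping requires the well-defined vertical coordinate to vary by at least $h_n/2$, and path length dominates that variation. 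Your version is arguably cleaner: it avoids constructing and analyzing a cover of the completion, and the separated-set criterion makes the failure of total boundedness immediate. Both arguments rest on the same geometric fact that tall thin cylinders contain points far from everything outside them.
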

\begin{proof}
Let $s$ denote the unique point in $\clos{X}_{H,W} \setminus X_{H,W}$.

Suppose $H$ tends to zero. Then, because $W$ also tends to zero, 
for every $\eps > 0$ there exists $N$ such that $h_n < \sqrt{\eps}$ 
and $w_n < \sqrt{\eps}$ for all $n \ge N$. This implies that the 
$\eps$-neighborhood $B_\eps$ of $s$ covers all $R_n$ for $n \ge N$. The 
complement of $B_\eps$ in $X_{H,W}$ is compact, being a finite union of 
compact pieces, and can therefore be covered by finitely many $\eps$-balls.

Suppose $H$ does not tend to zero. Then there exists $\eps_0 > 0$ such 
that $h_n > 2\eps_0$ for infinitely many $n \ge 1$. Fix such an $\eps_0$ 
and cover $\clos{X}_{H,W}$ by the following open sets:
\begin{itemize}
\item the $\eps_0$-neighborhood $B_{\eps_0}$ of $s$;
\item for each $R_n$ not fully covered by $B_{\eps_0}$ (of which there 
are infinitely many), the interior of this cylinder;
\item for each edge between $R_n$ and $R_{n+1}$, a neighborhood of radius 
$(1/3) \cdot \min\{h_n, h_{n+1}\}$.
\end{itemize}
Any finite subcover of this open cover would fail to cover infinitely many 
interior points of the $R_n$s, and so $\clos{X}_{H,W}$ cannot be compact.
\end{proof}

With these observations about $X_{H,W}$ in mind, we proceed to our 
counterexamples.

\begin{example}[\emph{finite area $\not\Rightarrow$ finite analytic type}] 
\label{Ex:aran}
Take $X_{H,W}$ with $h_n = w_n = 1/n$.
\end{example}

\begin{example}[\emph{finite area $\not\Rightarrow$ bounded}] 
\label{Ex:arid}
Take $X_{H,W}$ with $h_n = n$ and $w_n = 1/n^3$.
\end{example}

\begin{example}[\emph{finite area $\not\Rightarrow$ totally bounded}]
\label{Ex:artb}
Take $X_{H,W}$ with $h_n = 1$ and $w_n = 1/n^2$.
\end{example}

\begin{example}[\emph{bounded $\not\Rightarrow$ finite analytic type}]
Take Example \ref{Ex:aran} or \ref{Ex:artb}.
\end{example}

\begin{example}[\emph{bounded $\not\Rightarrow$ finite area}]
\label{Ex:bdar}
Take $X_{H,W}$ with $h_n = 1$ and $w_n = 1/n$.
\end{example}

\begin{example}[\emph{bounded $\not\Rightarrow$ totally bounded}]
Take Example \ref{Ex:artb} or \ref{Ex:bdar}.
\end{example}

\begin{example}[\emph{totally bounded $\not\Rightarrow$ finite analytic type}]
Take Example \ref{Ex:aran}.
\end{example}

\begin{example}[\emph{totally bounded $\not\Rightarrow$ finite area}]
\label{Ex:tbar}
Take $X_{H,W}$ with $h_n = w_n = 1/\sqrt{n}$.
\end{example}

\begin{example}[\emph{finite analytic type $\not\Rightarrow$ finite area or 
bounded}]
\label{Ex:anarbd}
The Riemann surface $\C^*$ has finite analytic type, since it is obtained 
from the Riemann sphere by removing two points. However, the translation 
structure given by the differential $\D{z}/z$ makes $\C^*$ isometric to an 
infinite cylinder, in which case it does not have finite area, and it is 
not bounded.
\end{example}

Example~\ref{Ex:anarbd} shows that the essential way a surface of finite 
analytic type can fail to have finite affine type is that one could take 
a {\em meromorphic} differential on a compact Riemann surface and remove 
the zeroes and poles to obtain a translation surface of finite analytic 
type. However, if we pair the ``finite analytic type'' condition with any 
of the others, then the rest follow. This fact is likely to be well-known, 
but we prove it here for completeness and to show how the analytic structure 
of the translation surface plays a role.

\begin{proposition}\label{P:1.2}
If $X$ has finite analytic type and finite area, then it is totally bounded.
\end{proposition}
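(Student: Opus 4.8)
The plan is to encode the translation structure by its holomorphic $1$-form $\omega$, given in each translation chart $w$ by $\omega = \D w$; this $\omega$ is nowhere vanishing on $X$, since the chart transitions are translations and cone points appear only in the completion. The hypothesis of finite analytic type lets me regard $X = \widehat{X} \setminus \{p_1,\dots,p_k\}$, where $\widehat{X}$ is a compact Riemann surface and the $p_j$ are the punctures. Around each $p_j$ I would choose a holomorphic coordinate $z$ with $z(p_j)=0$ on a punctured disk $0 < |z| < r$ and write $\omega = f(z)\,\D z$ with $f$ holomorphic there. Since the induced area form is $\tfrac{i}{2}\,\omega \wedge \conj{\omega} = |f(z)|^2\,\D A$ (with $\D A$ Lebesgue measure in the $z$-coordinate), the area of this punctured neighborhood is $\int_{0<|z|<r} |f|^2\,\D A$, and finiteness of area says this integral converges.

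The first real step is to extract from this convergence that $\omega$ extends \emph{holomorphically} across each puncture. Expanding $f$ in its Laurent series $f(z)=\sum_n a_n z^n$ on the annulus and integrating in polar coordinates, orthogonality of the functions $e^{in\theta}$ gives
\[
\int_{0<|z|<r} |f|^2\,\D A = 2\pi \sum_{n} |a_n|^2 \int_0^r \rho^{2n+1}\,\D\rho .
\]
Every summand is nonnegative, and for $n \le -1$ the radial integral diverges; hence finiteness of the area forces $a_n = 0$ for all $n \le -1$. Thus $f$ has a removable singularity at $z=0$, so $\omega$ extends to a holomorphic $1$-form $\conj{\omega}$ on the compact surface $\widehat{X}$. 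This argument rules out poles and essential singularities simultaneously, and it recovers Example~\ref{Ex:anarbd}: the simple pole of $\D z/z$ is exactly the borderline $n=-1$ term whose integral diverges.

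With $\conj{\omega}$ holomorphic on the compact surface $\widehat{X}$, the length element $|\conj{\omega}|$ defines a flat metric with a cone point of angle $2\pi(m+1)$ at each zero of order $m$; as there are only finitely many zeros, $(\widehat{X}, |\conj{\omega}|)$ is a compact length space. It then remains to identify this with the metric completion $\clos{X}$ of $X$. Because the punctures form a finite (hence nowhere-dense, measure-zero) subset of a surface, any rectifiable path may be perturbed off them with arbitrarily small change in $|\conj{\omega}|$-length; so the intrinsic metric $d_X$ coincides with the restriction of the $|\conj{\omega}|$-metric, each $p_j$ lies at finite distance, and each is a limit of points of $X$. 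Hence $\clos{X} = (\widehat{X}, |\conj{\omega}|)$, which is compact, and by the equivalence recorded in condition~\eqref{I:tb}, $X$ is totally bounded.

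The substantive point is the second paragraph: the passage from an $L^2$-type area bound on $f$ to the holomorphic extension of $\omega$ across the punctures. The remaining geometric work—verifying that deleting finitely many points does not enlarge the path metric, so that the completion is the compact surface $\widehat{X}$—is routine but is where one must be slightly careful, since \emph{a priori} a puncture could have been at infinite distance (as in Example~\ref{Ex:anarbd}); the holomorphic extension is precisely what prevents this.
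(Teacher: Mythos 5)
Your proof is correct and takes essentially the same route as the paper: use finiteness of the area integral to extend the abelian differential holomorphically across the punctures of the compact surface $\widetilde{X}$, so that the metric completion of $X$ is $\widetilde{X}$ itself and hence compact. The only difference is one of detail—you supply the Laurent-series/orthogonality computation justifying the extension step, which the paper simply asserts.
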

\begin{proof}
The translation structure is given by an abelian differential on $X$. 
Let $\til{X}$ denote the compact surface from which $X$ is obtained as 
a Riemann surface. Because $X$ has finite area, the differential can be 
extended to $\til{X}$; each point of $\til{X} \setminus X$ is either a 
regular point or a zero of the differential. Thus $\clos{X}$ is canonically 
homeomorphic to $\til{X}$, so $X$ is totally bounded.
\end{proof}

\begin{proposition}\label{P:1.3}
If $X$ has finite analytic type and it is bounded, then it has finite area.
\end{proposition}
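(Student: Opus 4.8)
The plan is to work with the abelian differential $\omega$ defining the translation structure and to localize the problem at the finitely many punctures. Writing $X = \til{X}\setminus\{p_1,\dots,p_k\}$ with $\til{X}$ compact, I would first reduce the global statement to one about each puncture separately. Choosing disjoint coordinate disks $U_j$ around the $p_j$ and setting $K = X\setminus\bigcup_j U_j$, the set $K$ is a compact surface with boundary, hence has finite area and finite diameter. Consequently $X$ has finite area if and only if each punctured-disk end $U_j\setminus\{p_j\}$ does, and---since $z\mapsto d_X(z,K)$ is $1$-Lipschitz and vanishes on $K$, while any path from an interior point of an end to $K$ must cross the bounding circle---if any end contains points arbitrarily far from its bounding circle then $X$ is unbounded. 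Thus it suffices to prove the local statement: a bounded end has finite area.

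In a coordinate $z$ with the puncture at $0$ on a disk $0<|z|<r'$, write $\omega = g(z)\,\D z$ with $g$ holomorphic on the punctured disk and Laurent expansion $g=\sum_n a_n z^n$. A direct computation using Parseval's identity gives the area of the end as $2\pi\sum_n|a_n|^2\int_0^{r'} r^{2n+1}\,\D r$, which is finite precisely when $a_n=0$ for all $n<0$, that is, precisely when $\omega$ extends holomorphically across the puncture (in agreement with Proposition~\ref{P:1.2}). Taking the contrapositive, I assume $g$ has a genuine singularity at $0$ and aim to show the end is unbounded. For lower bounds on distance I will use that any path from an interior point $z$ to the bounding circle $C_{r'}=\{|z|=r'\}$ has flat length at least $\int_{|z|}^{r'}\min_{|w|=r}|g(w)|\,\D r$, obtained by projecting the path to the radial direction.

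If $g$ has a pole, let $a_{m_0}z^{m_0}$ be its lowest-order term, $m_0\le -1$. Then $g=a_{m_0}z^{m_0}\bigl(1+O(z)\bigr)$, so $\min_{|w|=r}|g(w)|\ge|a_{m_0}|r^{m_0}\bigl(1-O(r)\bigr)$ uniformly in angle, and the radial bound gives $d_X(z,C_{r'})\ge\int_{|z|}^{r'}|a_{m_0}|r^{m_0}\bigl(1-O(r)\bigr)\,\D r\to\infty$ as $|z|\to 0$, since $\int_0^{r'} r^{m_0}\,\D r$ diverges for $m_0\le -1$. Hence the end is unbounded. This already settles the meromorphic case highlighted in the discussion after Example~\ref{Ex:anarbd}.

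The remaining, and genuinely harder, case is an essential singularity, where the radial bound can fail because $g$ may vanish on every small circle, making $\min_{|w|=r}|g|$ small. Here I would instead exploit that $\sup_{|w|=r}|g(w)|\to\infty$ as $r\to 0$ (for instance from $\tfrac{1}{2\pi}\int_0^{2\pi}|g(\rho e^{i\theta})|\,\D\theta\ge\sup_n|a_n|\rho^n$, together with the openness and distortion properties of the holomorphic function $g$: the goal is to produce points $z_j\to 0$ and Euclidean balls $B(z_j,s_j)$, disjoint from $C_{r'}$, on which $|g|\ge m_j$ with $s_j m_j\to\infty$. Since the flat distance from $z_j$ to the boundary of $B(z_j,s_j)$ is then at least $m_j s_j$, and any path to $K$ must leave this ball, one gets $d_X(z_j,K)\ge m_j s_j\to\infty$, so the end is unbounded. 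The crux---and the step I expect to be the main obstacle---is the quantitative claim that an essential singularity forces the metric density to blow up on balls whose radius times density tends to infinity (morally, that essential singularities grow faster than any pole), while simultaneously ruling out short escape routes through the regions where $g$ is small. Combining the two cases shows that a bounded end has finite area, and the reduction of the first paragraph then yields the proposition.
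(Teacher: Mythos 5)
Your reduction to the ends, the Parseval computation showing that the end has finite area exactly when the differential extends holomorphically across the puncture, and the pole case are all correct; the pole case is in fact the entire content of the paper's own proof, which simply takes for granted that the differential is meromorphic on $\til{X}$ and then observes that a pole would put the puncture at infinite flat distance from the rest of the surface, contradicting boundedness. So up to that point you have faithfully reconstructed the argument, and you have correctly identified the one issue the paper passes over in silence: a holomorphic $1$-form on $\til{X}\setminus\{p_1,\dots,p_k\}$ could a priori have an essential singularity at a puncture.

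But that is precisely where your proof stops being a proof. The statement you need --- that an essential singularity of $g$ forces the existence of points $z_j\to 0$ and Euclidean balls $B(z_j,s_j)$ avoiding $C_{r'}$ on which $|g|\ge m_j$ with $s_jm_j\to\infty$ --- is exactly the ``crux'' you flag, and you give no argument for it. The difficulty is real: the fact that $\sup_{|w|=\rho}|g|$ grows faster than any power of $1/\rho$ controls $g$ only at one point of each circle, and a priori $|g|$ could drop from its maximum to a moderate value within a Euclidean distance much smaller than $\rho$ (the level sets $\{|g|>t\}$ can be very thin, and a Cauchy or Harnack estimate on $B(z_\rho,c\rho)$ involves $\sup_{|w|=\rho/2}|g|$, which may be astronomically larger than $\sup_{|w|=\rho}|g|$, so such estimates do not deliver the needed lower bound for free); one must also rule out short escape routes threading through the regions where $|g|$ is small, as you note. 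As written, the proposal therefore establishes the proposition only under the additional hypothesis that the differential is meromorphic at the punctures --- which is the same hypothesis the paper's proof builds in by assertion --- and the essential-singularity case remains an unproven claim rather than a completed step.
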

\begin{proof}
The translation structure is given by an abelian differential on $X$ that 
is meromorphic on the compact Riemann surface $\til{X}$ from which it is 
obtained by punctures. Because $X$ is bounded, none of the punctures is at 
an infinite distance from any other point of $X$. Therefore the differential 
has no poles on $\til{X}$, and so it has finite area.
\end{proof}

\begin{proof}[Proof of second part of Theorem~\ref{T:1}]
Immediate from Propositions \ref{P:1.1}, \ref{P:1.2}, and \ref{P:1.3}.
\end{proof}

To conclude this section, we observe that other collections of conditions 
do not imply any of the remaining ones, except as trivially follows from 
what has been established.

\begin{example}[\emph{finite area $+$ totally bounded $\not\Rightarrow$ 
finite analytic type}]
Take Example~\ref{Ex:aran}.
\end{example}

\begin{example}[\emph{finite area $+$ bounded $\not\Rightarrow$ totally bounded}]
Take Example~\ref{Ex:artb}.
\end{example}

\section{Discreteness of Veech groups}\label{S:discrete}

Recently, it has become apparent that translation surfaces of infinite 
analytic type allow for Veech groups of much greater complexity than occurs 
in the case of finite type. Specifically, it is well-known that the Veech 
group of a translation surface of finite affine type is always a Fuchsian 
(i.e., discrete) subgroup of $\Lie{SL}_2(\R)$ and is never co-compact. In 
contrast, it has been shown by direct construction that any countable 
subgroup of $\Lie{SL}_2(\R)$ (in fact, of $\Lie{GL}_2^+(\R)$) that avoids 
the set of matrices with operator norm less than $1$ can occur as the Veech 
group of a translation surface whose topological type is that of a ``Loch 
Ness Monster'', meaning it has infinite genus and one topological end 
\cite{pPgSfV11}. Other ``naturally occurring'' examples (e.g., the surface 
obtained by ``unfolding'' an irrational polygon \cite{fV12}) also 
demonstrate that one cannot in general expect the Veech group of a 
translation surface of infinite type to be discrete. In this section, 
we show that this phenomenon of non-discreteness relies essentially on 
the failure of a surface to be totally bounded or to have finite area; 
i.e., it is not enough that the surface simply have infinite analytic type.

The usual proof of discreteness in the case of finite affine type is 
carried out by showing that the Veech group acts on the set of holonomy 
vectors of saddle connections, which is a discrete subset of $\C$ (see, 
e.g., \cite{yV96}). For surfaces not of finite affine type, this last 
clause no longer holds: in many examples, the holonomy vectors of saddle 
connections do not have their lengths bounded away from zero. We find 
another subset of $\C$ on which the Veech group acts and which, under the 
conditions of Theorem~\ref{T:2}, is also discrete. Our proof holds also 
for surfaces of finite affine type, and bypasses considerations of whether 
the holonomy vectors of saddle connections form a discrete set or not.

Observe, first of all, that if $X$ has finite area, then any element of 
$\Aff^+(X)$ must preserve this area, and so the condition that 
$\Gamma(X) \subset \Lie{SL}_2(\R)$ follows automatically. Similarly, we 
have the following.

\begin{lemma}\label{L:2.1}
If $X$ is totally bounded, then $\Gamma(X) \subset \Lie{SL}_2(\R)$.
\end{lemma}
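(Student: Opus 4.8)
The plan is to run the area-preservation strategy of the finite-area case, but with the (possibly infinite) total area replaced by a finite quantity that still records the scaling factor $\lambda := \det(\der\phi)$. We may assume $\lambda > 1$, replacing $\phi$ by $\phi\inv$ if necessary and invoking triviality when $\lambda = 1$. Since $X$ is totally bounded, $\clos{X}$ is compact, and since $\phi$ is Lipschitz (with constant the operator norm of $\der\phi$) it extends to a homeomorphism of $\clos{X}$. For $r > 0$ let $X_r \subset X$ be the set of points whose injectivity radius is at least $r$, i.e.\ around which an embedded Euclidean $r$-disk sits. I would first establish that $X_r$ has finite area for every $r > 0$: cover $X$ by finitely many balls of radius $r/2$; whenever such a ball $B$ meets $X_r$, any point $x \in B \cap X_r$ has an embedded $r$-disk containing $B$, so $B$ has area at most $\pi r^2/4$, and summing over the finite cover bounds $\mathrm{Area}(X_r)$. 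For small $r$ one also has $\mathrm{Area}(X_r) > 0$, since $X$ contains genuine flat disks.

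Next I would exploit how $\phi$ moves thick parts. Writing $\sigma_1 \ge \sigma_2 > 0$ for the singular values of $A := \der\phi$ (so $\sigma_1 \sigma_2 = \lambda$), the image under $\phi^n$ of an embedded $r$-disk is an embedded ellipse with semi-axes $\sigma_i(A^n)\,r$, hence contains an embedded disk of radius $\sigma_{\min}(A^n)\,r$; thus $\phi^n(X_r) \subseteq X_{\sigma_{\min}(A^n)r}$, while $\mathrm{Area}(\phi^n(X_r)) = \lambda^n \mathrm{Area}(X_r)$ because $\phi^n$ has constant Jacobian $\lambda^n$. If both eigenvalues of $A$ have modulus $> 1$ then $\sigma_{\min}(A^n) \to \infty$, and in fact $\phi^n$ then expands every distance by $\sigma_{\min}(A^n) \to \infty$, contradicting boundedness of $X$ directly (the images of a fixed pair of points would exceed the diameter). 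The case that genuinely requires total boundedness rather than mere boundedness is $\sigma_{\min}(A^n) \ge 1$ without tending to infinity — the ``cylinder type'' $A \sim \mathrm{diag}(\lambda,1)$, where an eigendirection is preserved isometrically. Here $X_{\sigma_{\min}(A^n)r} \subseteq X_r$, so $\lambda^n \mathrm{Area}(X_r) = \mathrm{Area}(\phi^n(X_r)) \le \mathrm{Area}(X_r)$ with $0 < \mathrm{Area}(X_r) < \infty$, forcing $\lambda \le 1$, a contradiction.

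I expect the main obstacle to be the remaining possibility: $A$ hyperbolic with eigenvalues straddling $1$ (so $\det \ne 1$, yet neither $\phi$ nor $\phi\inv$ is eventually expanding in the above sense). Then $\sigma_{\min}(A^n) \to 0$, the thick parts $\phi^n(X_r)$ shrink into thinner and thinner regions, and the isotropic estimate only yields $\lambda^n \mathrm{Area}(X_r) \le \mathrm{Area}(X_{\sigma_{\min}(A^n)r})$, which is harmless once $\mathrm{Area}(X) = \infty$; note this case is vacuous when $\mathrm{Area}(X) < \infty$, already covered by the automatic area argument, so it only bites for infinite-area totally bounded surfaces. To handle it I would replace the isotropic thick part by an anisotropic one adapted to the eigendirections of $A$, which are globally defined constant directions because the translation structure has trivial linear holonomy. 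Letting $g(a,b)$ denote the area of the set of points carrying an embedded flat $a \times b$ rectangle aligned with these eigendirections — finite by the same covering argument — one obtains the exact relation $g(\mu_1 a, \mu_2 b) = \lambda\, g(a,b)$, since $\phi$ carries such rectangles bijectively to such rectangles. The crux is then to bound $g(a,b)$ for large $a$: total boundedness should forbid a positive-area set of points from admitting arbitrarily long embedded segments in the expanding direction, which is the precise sense in which ``an expanding direction needs unbounded room'' is incompatible with compactness of $\clos{X}$. Equivalently, this amounts to showing that the transverse measure of the unstable foliation is finite and therefore cannot be scaled by $\mu_2 \ne 1$ under the bijection $\phi$; making this last step rigorous is, I expect, where the real work lies.
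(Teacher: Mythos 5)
Your strategy (track the image of a finite-area ``thick part'' under iteration and compare areas) is not the paper's, and it has a genuine gap which you yourself flag: the case where $\der\,\phi$ has determinant $\lambda>1$ but eigenvalues straddling $1$, so that $\sigma_{\min}((\der\,\phi)^n)\to 0$ and the inclusion $\phi^n(X_r)\subseteq X_{\sigma_{\min}r}$ pushes the image into \emph{larger} thick parts, yielding no contradiction. The cases you do close are fine (the finiteness and positivity of $\mathrm{Area}(X_r)$ via a cover by $(r/2)$-balls is correct, as are the expanding and ``cylinder-type'' cases, the latter really only needing $\sigma_{\min}((\der\,\phi)^n)\ge c>0$), but the straddling case is precisely the one that matters, and your proposed repair is not obviously repairable: the claim that total boundedness forbids a positive-area set of points carrying arbitrarily long embedded parallelograms in a fixed direction is far from clear, because total boundedness does \emph{not} control areas --- the paper's own Example~\ref{Ex:tbar} is totally bounded with infinite area, and an $\eps$-ball there already has infinite area. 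Asserting that ``the transverse measure of the unstable foliation is finite'' is essentially a restatement of what must be proved, so the real content of the lemma is still missing.

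The paper avoids the eigenvalue trichotomy entirely. It uses compactness of $\clos{X}$ to get topological recurrence: for any open $U\subset X$ of finite area (e.g.\ a small embedded disk), the iterates $\phi^{\circ n}(U)$ cannot all be disjoint, and iterating this yields a first return map $R_\phi$ of $U$ defined off a null set. Since $R_\phi$ is injective with image in $U$ and its Jacobian determinant at $x$ is $\lambda^{n(x)}\ge\lambda$, one gets $\mathrm{Area}(U)\ge\int_U\det DR_\phi\,d\mathrm{Area}\ge\lambda\,\mathrm{Area}(U)$, hence $\lambda\le 1$; applying the same to $\phi\inv$ gives $\lambda=1$. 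The key idea you are missing is to return to a \emph{fixed} finite-area set rather than following the thick part forward, which makes the sign of $\log\sigma_{\min}$ irrelevant. I recommend either adopting that recurrence argument or finding a genuinely new mechanism for the straddling case; as written, the proof is incomplete.
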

\begin{proof}
We use the compactness of $\clos{X}$ to establish a kind of Poincar{\'e} 
recurrence, which will permit us to define a first return map. Let 
$\phi \in \Aff^+(X)$. For any open subset $U$ of $X$ with piecewise smooth 
boundary, we observe that the images $\phi^{\circ n}(U)$ cannot all be 
disjoint: for otherwise, we could take them together with one more open 
subset, formed by the union of their complement and regular neighborhoods 
of their boundaries, and we would have an open cover of $\clos{X}$ with no 
finite subcover. Therefore, by a standard argument, 
$\phi^{\circ N}(U) \cap U \ne \varnothing$ for some $N \ge 1$. 
Proceeding inductively, we obtain a first return map $R_\phi$ into $U$, 
defined on an open subset of $U$ whose complement has measure zero. Choose 
$U$ so that it has finite area. The area of the image is 
\[
\mathrm{Area}(R_\phi) = \int_U (\det DR_\phi)\,d\mathrm{Area} 
\le \mathrm{Area}(U).
\]
If $\der\,\phi$ had determinant greater than $1$, then the Jacobian 
determinant in the above integral would be greater than $1$ on the entire 
domain, and the given inequality would not hold. We conclude that any 
element of $\Aff^+(X)$ must have a derivative in $\Lie{SL}_2(\R)$.
\end{proof}

Now we proceed to the main ideas in the proof of Theorem~\ref{T:2}.
Throughout this section, we take cylinders in $X$ to be {\em open} 
subsets of $X$; that is, they do not include their boundaries.

\begin{lemma}\label{L:2.2}
Let $C_1$ and $C_2$ be two maximal cylinders in a translation surface whose 
respective circumferences are $w_1$ and $w_2$ and whose respective heights 
are $h_1$ and $h_2$, and suppose that they intersect but do not coincide. 
Then the angle $\theta$ between the core curves of $C_1$ and $C_2$ satisfies 
\[
|\tan\theta| > \min \left\{ \frac{h_1}{w_1},\frac{h_2}{w_2} \right\}.
\]
\end{lemma}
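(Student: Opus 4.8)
The plan is to reduce to the case where $C_1$ is horizontal and then track how a single closed leaf of $C_2$ passes through $C_1$. After rotating the translation structure I may assume the core curve of $C_1$ is horizontal; this changes neither the angle $\theta$ nor the ratios $h_i/w_i$. Before anything else I would record that $\theta \neq 0$: two maximal cylinders sharing a direction are each a maximal union of parallel closed leaves, so if they met they would coincide, contrary to hypothesis. Hence $\theta \in (0,\pi/2]$, and the case $\theta = \pi/2$ is immediate since $|\tan\theta|$ is then unbounded; so I may assume $\theta \in (0,\pi/2)$.

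The heart of the argument is an excursion estimate. I would pick a point $p \in C_1 \cap C_2$ and let $\tau$ be the closed leaf of the $C_2$-foliation through $p$; since cylinders are taken to be open, $p$ is interior to $C_2$, so $\tau$ is parallel to the core curve of $C_2$ and has length $w_2$ and direction $\theta$. In the coordinates of $C_1 = \R/w_1\Z \times (0,h_1)$ the transverse coordinate is strictly monotone along $\tau$ (its direction is not horizontal), so the maximal subarc of $\tau$ through $p$ lying in $C_1$ enters through one boundary component and leaves through the other, traversing the full height $h_1$. That subarc therefore has length $h_1/\sin\theta$, and since $\tau$ also has points outside $C_1$ it is a proper subarc, giving $h_1/\sin\theta < w_2$, i.e.\ $\sin\theta > h_1/w_2$. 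Exchanging the roles of the two cylinders — crossing a horizontal leaf through $C_2$, whose transverse width is $h_2$, with the same angle $\theta$ between the directions — yields $\sin\theta > h_2/w_1$ by identical reasoning.

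To finish I would combine these with two elementary facts. Since $\theta \in (0,\pi/2)$ we have $|\tan\theta| > \sin\theta$, and one checks directly that $\max\{h_1/w_2,\,h_2/w_1\} \ge \min\{h_1/w_1,\,h_2/w_2\}$: were both $h_1/w_2$ and $h_2/w_1$ strictly below the minimum, the first would force $w_1 < w_2$ and the second $w_2 < w_1$. Chaining these,
\[
|\tan\theta| > \sin\theta \ge \max\left\{\frac{h_1}{w_2},\frac{h_2}{w_1}\right\} \ge \min\left\{\frac{h_1}{w_1},\frac{h_2}{w_2}\right\},
\]
which is the claim.

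The step I expect to demand the most care is the assertion that the crossing subarc is proper, equivalently that $\tau$ is not entirely contained in $\clos{C_1}$. A closed geodesic of direction $\theta \neq 0$ trapped in the closure of a genuine cylinder would have to meet its two distinct boundary components, which is impossible; the only way $\tau$ can fail to leave $C_1$ is if $C_1$ has no singular boundary at all, which forces $X$ to be a flat torus with $C_1$ all of $X$. I would dispose of that degenerate case by a direct computation with the two lattice holonomies, where the inequality is readily verified and the weaker minimum on the right is precisely what keeps it valid.
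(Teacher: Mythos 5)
Your argument is correct in substance and, once unpacked, runs structurally parallel to the paper's proof: there, the components of $C_1\cap C_2$ are parallelograms whose sides in the direction of $\gamma_2$ are arcs of $\partial C_2$ of length $h_1/\sin\theta < w_2$ (and symmetrically), and the passage from these cross-index bounds to $\min\{h_1/w_1,h_2/w_2\}$ is made by comparing $w_1$ with $w_2$ --- exactly the roles played by your crossing-arc estimate and your $\max\ge\min$ observation. Your route through $\sin\theta$ and the strict inequality $\tan\theta>\sin\theta$ is harmless and even convenient: it absorbs the one place where your strict bound can degenerate, namely when the two boundary components of $C_1$ are identified along a saddle connection through which $\tau$ passes, in which case $\tau\setminus C_1$ is a single point and $h_1/\sin\theta=w_2$ exactly. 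You then only get $\sin\theta\ge h_1/w_2$, but your displayed chain already uses $\ge$ in that spot and strictness survives via $\tan\theta>\sin\theta$. Your explicit treatment of $\theta=0$ (impossible for intersecting, non-coinciding maximal cylinders) and $\theta=\pi/2$ (trivial) is also welcome; the paper leaves the first implicit.

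The one assertion that is actually false is in your final paragraph: a closed geodesic of direction $\theta\ne 0$ \emph{can} be contained in $\clos{C_1}$ without $X$ being a torus. In a one-cylinder direction one has $\clos{C_1}=\clos{X}$, so every closed curve is ``trapped in the closure''; such a geodesic simply exits through the top boundary of $C_1$ and immediately re-enters through the bottom at an identified boundary point. Fortunately the condition you need is only that $\tau\not\subset C_1$ (the \emph{open} cylinder), so that the crossing arc is a proper subarc of length at most $w_2$, and this follows in one line from the monotonicity you already invoked: if $\tau\subset C_1$, the transverse coordinate of $C_1$ would be strictly monotone along the whole of the closed curve $\tau$, which is absurd. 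Replace the $\clos{C_1}$ discussion with that observation; the only residual degeneracy is then the torus, where $C_1=C_2=X$ as sets, and your lattice computation ($w_1w_2|\sin\theta|\ge\mathrm{Area}(X)$, i.e.\ $\sin\theta\ge h_1/w_2=h_2/w_1$) disposes of it as you indicate.
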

\begin{proof}
Let $\gamma_1$ and $\gamma_2$ be the core curves of $C_1$ and $C_2$, 
respectively. If $\gamma_1$ and $\gamma_2$ meet at right angles, then we 
are done. So suppose they do not meet at right angles. We note that each 
time $\gamma_1$ crosses one boundary component of $C_2$, it must cross the 
other boundary component before returning to the first, and likewise for 
$\gamma_2$ crossing the boundary of $C_1$. Therefore the connected 
components of $C_1 \cap C_2$ are Euclidean parallelograms whose sides are 
arcs of the boundaries of $C_1$ and $C_2$. Let $P$ be one such 
parallelogram (see Figure~\ref{F:C1C2}). The angles of $P$ are $\theta$ 
and $\pi - \theta$, so it suffices to consider the smaller of these angles. 
Note that $h_1$ and $h_2$ are also the two heights of $P$. Let $l_1$ and 
$l_2$ be the distances from the vertex at $\theta$ to the orthogonal 
projections of the adjacent vertices onto the adjacent sides of $P$ in the 
directions of $\gamma_1$ and $\gamma_2$, respectively. At least one of the 
following inequalities holds: $l_1 < w_1$ or $l_2 < w_2$. But 
$\tan\theta = h_1/l_1 = h_2/l_2$, from which the desired result follows.
\end{proof}

\begin{figure}
\includegraphics{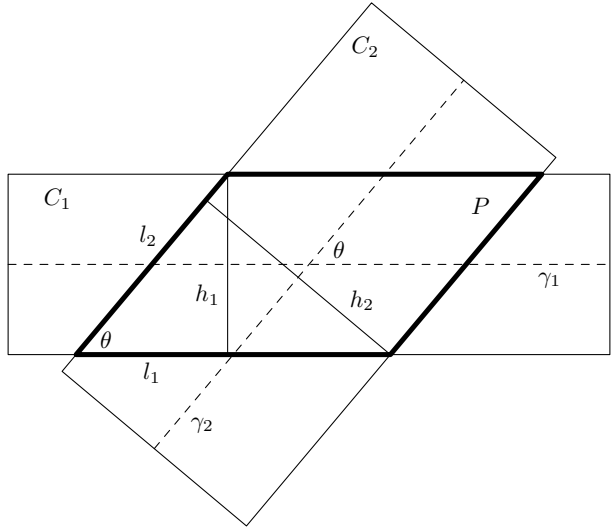}
\caption{Setup for the proof of Lemma~\ref{L:2.2}. The length of 
$\gamma_1$ is $w_1$, and the length of $\gamma_2$ is $w_2$.}
\label{F:C1C2}
\end{figure}

\begin{lemma}\label{L:2.3}
If $v_0, v \in \C$ satisfy $|v_0 - v| < \eps < |v_0|$, then the angle 
$\theta$ between $v_0$ and $v$ satisfies 
\[
|\tan\theta| < \frac{\eps}{\sqrt{|v_0|^2 - \eps^2}}.
\]
\end{lemma}
\begin{proof}
Under the given conditions, the largest angle a vector $v$ can make with 
$v_0$ is when $v$ is tangent to the circle with radius $\eps$ centered at 
$v_0$; the assumptions imply that this angle is strictly smaller than 
$\pi/2$ in absolute value. The result now follows by direct calculation 
of the tangent of the angle in this extreme case and monotonicity of the 
tangent function on $(-\pi/2,\pi/2)$.
\end{proof}

The values $h_1/w_1$ and $h_2/w_2$ in Lemma~\ref{L:2.2} are, of course, 
the {\em moduli} of the cylinders. The basic idea behind the next lemma 
is that if two cylinders have the same area and almost the same 
circumference, then their moduli are not very different; we can thus play 
the two inequalities of Lemmata \ref{L:2.2} and \ref{L:2.3} against each 
other.

\begin{notation*}
Given a translation surface $X$ and $A > 0$, we denote by 
$\mathcal{C}(A)$ the set of maximal cylinders on $X$ with area $A$, 
and by $\mathcal{V}(A) \subset \C$ the set of holonomy vectors of core 
curves of elements of $\mathcal{C}(A)$.
\end{notation*}

\begin{lemma}\label{L:2.4}
Let $X$ be a translation surface that either is totally bounded or has 
finite area, and let $A > 0$. Then $\mathcal{V}(A)$ is either empty or a 
discrete subset of $\C$.
\end{lemma}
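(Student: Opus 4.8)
The plan is to show that $\mathcal{V}(A)$ has no accumulation points in $\C$. Suppose, for contradiction, that $v_0 \in \C$ is an accumulation point. Then there is a sequence of distinct holonomy vectors $v_k \in \mathcal{V}(A)$ with $v_k \to v_0$. I would first observe that $v_0 \ne 0$: since each cylinder has fixed area $A$ and circumference $|v_k|$, its height is $h_k = A/|v_k|$, and if $|v_k| \to 0$ the heights would blow up; I would need to rule this out separately, most easily by noting that under either hypothesis (total boundedness or finite area) the heights cannot grow without bound (for finite area, a cylinder of area $A$ and very large height would have tiny circumference but still contributes $A$ to the total, which is fine — so the real control on $|v_0| > 0$ must come from the geometry, and this is precisely where I expect the first subtlety to lie). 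Granting $v_0 \ne 0$, I pass to a tail of the sequence where all $v_k$ lie in a small ball of radius $\eps < |v_0|$ about $v_0$, so that Lemma~\ref{L:2.3} applies to each pair.

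The heart of the argument is to play Lemmata~\ref{L:2.2} and~\ref{L:2.3} against each other, exactly as the remark preceding the lemma suggests. Fix two distinct cylinders $C_1, C_2$ from the sequence, with core holonomies $v_1, v_2$ close to $v_0$. By Lemma~\ref{L:2.3}, taking $v_0$ to be $v_1$ and $v = v_2$ (both within $\eps$ of the accumulation point, hence within $2\eps$ of each other), the angle $\theta$ between their core curves satisfies an upper bound of the form $|\tan\theta| < 2\eps/\sqrt{|v_1|^2 - (2\eps)^2}$, which I can make as small as I like by shrinking $\eps$. On the other hand, since both cylinders have area $A$ and circumferences $w_i = |v_i|$ close to $|v_0|$, their moduli $h_i/w_i = A/w_i^2$ are both close to $A/|v_0|^2 > 0$ and in particular bounded below by a positive constant independent of $k$. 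If $C_1$ and $C_2$ intersect but do not coincide, Lemma~\ref{L:2.2} then forces $|\tan\theta|$ to exceed this positive constant, contradicting the upper bound once $\eps$ is small enough.

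It remains to handle the case where the cylinders in the sequence are pairwise disjoint (or some coincide). Coincidence is impossible because the $v_k$ are distinct. For the disjoint case, I would argue that infinitely many pairwise disjoint maximal cylinders, each of area $A$, cannot coexist on a surface that is totally bounded or has finite area: in the finite-area case their total area $\sum A = \infty$ exceeds $\mathrm{Area}(X)$, an immediate contradiction; in the totally bounded case I would use compactness of $\clos{X}$ to derive a contradiction from the existence of infinitely many disjoint open cylinders of a fixed area (their cores have circumference bounded away from zero since $|v_k| \to |v_0| \ne 0$, and total boundedness should preclude infinitely many disjoint regions of definite size, by an $\eps$-net argument analogous to the ``stack of boxes'' lemma). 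Hence the sequence must contain infinitely many pairwise-intersecting distinct cylinders, and the previous paragraph applies.

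The main obstacle I anticipate is establishing $v_0 \ne 0$ rigorously and, relatedly, ruling out the disjoint case in the totally bounded setting: total boundedness controls the surface metrically but a maximal cylinder of area $A$ can be long and thin, so the argument that only finitely many disjoint such cylinders fit is the delicate point and must invoke the compactness of $\clos{X}$ carefully, rather than a naive area count. Once both the nonvanishing of $v_0$ and the exclusion of the all-disjoint scenario are secured, the contradiction between the two tangent inequalities closes the argument cleanly.
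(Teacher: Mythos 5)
Your proposal follows essentially the same route as the paper: bound the moduli of area-$A$ cylinders with nearby holonomy from below by a quantity tending to $A/|v_0|^2>0$, bound the angle between nearby holonomy vectors from above via Lemma~\ref{L:2.3}, conclude from Lemma~\ref{L:2.2} that the corresponding cylinders must be pairwise disjoint once the neighborhood is small enough, and then rule out infinitely many disjoint area-$A$ cylinders using finite area or total boundedness. The one place you diverge is in framing the claim as ``$\mathcal{V}(A)$ has no accumulation point in $\C$,'' which forces you to confront the possibility $v_0=0$ --- an obstacle you flag as the main difficulty and leave unresolved. That obstacle is self-inflicted: the lemma asserts only that $\mathcal{V}(A)$ is discrete as a subspace, i.e.\ that each of its points is isolated in it, so the paper takes $v_0\in\mathcal{V}(A)$ from the outset and shows only finitely many elements of $\mathcal{V}(A)$ lie within $\eps_0$ of it. Since $v_0$ is then itself the holonomy of a core curve, $|v_0|>0$ automatically, and nothing about accumulation at $0$ (which the lemma does not claim to exclude, and which is all that is needed for the application in Theorem~\ref{T:2}) has to be proved. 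With that reframing your argument collapses onto the paper's; the remaining loose phrasing (``the sequence must contain infinitely many pairwise-intersecting cylinders'') should just read that every pair either intersects, which is impossible by the angle comparison, or is disjoint, which is impossible by the counting argument.
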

\begin{proof}
Let $v_0 \in \mathcal{V}(A)$. If $0 < \eps < |v_0|$ and 
$v \in \mathcal{V}(A)$ is any vector such that $|v - v_0| < \eps$, 
then the modulus of any corresponding cylinder is bounded below by 
\[
f_1(\eps) = \frac{A}{(|v_0| + \eps)^2}.
\]
On the other hand, Lemma~\ref{L:2.3} implies that if $|v_0 - v| < \eps$ 
and $|v_0 - w| < \eps$, then the absolute value of the tangent of the 
angle between $v$ and $w$ is bounded above by 
\[
f_2(\eps) = \frac{2\eps \sqrt{|v_0|^2 - \eps^2}}{|v_0|^2 - 2\eps^2}.
\]
Note that, as $\eps \to 0$, $f_1(\eps)$ tends to $A/|v_0|^2$, while 
$f_2(\eps)$ tends to $0$. We can therefore choose $\eps_0 > 0$ small 
enough that $f_2(\eps_0) < f_1(\eps_0)$. Then Lemma~\ref{L:2.2} implies 
that, for any pair of distinct elements $v,w \in \mathcal{V}(A)$ such 
that $|v_0 - v| < \eps_0$ and $|v_0 - w| < \eps_0$, the corresponding 
cylinders in $\mathcal{C}(A)$ must be disjoint.

If $X$ has finite area, there can only be finitely many such cylinders, 
and so there can only be finitely many elements of $\mathcal{V}(A)$ within 
$\eps_0$ of $v_0$.

If $X$ is totally bounded, there again can be only finitely many such 
cylinders; otherwise we could find infinitely many disjoint balls of some 
fixed positive radius on $X$, which is impossible in a totally bounded 
space.

In either case, $v_0$ is an isolated point in $\mathcal{V}(A)$; since 
$v_0$ was arbitrary, $\mathcal{V}(A)$ is discrete.
\end{proof}

\begin{lemma}\label{L:2.5}
Let $X$ be a translation surface that either is totally bounded or has 
finite area, and let $A > 0$. Then $\Aff^+(X)$ preserves $\mathcal{C}(A)$ 
and $\Gamma$(X) preserves $\mathcal{V}(A)$.
\end{lemma}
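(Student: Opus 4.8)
The plan is to prove the two preservation statements in turn, reducing both to the fact that an element of $\Aff^+(X)$ acts on the translation structure through its derivative and carries cylinders to cylinders.

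First I would establish that for any $\phi \in \Aff^+(X)$ and any maximal cylinder $C$, the image $\phi(C)$ is again a maximal cylinder. Since $\phi$ is affine in local charts, it maps straight segments to straight segments and parallel lines to parallel lines; hence the family of parallel closed geodesics foliating $C$ is carried to a family of parallel closed geodesics foliating $\phi(C)$, so $\phi(C)$ is a cylinder. Maximality is preserved because $\phi$ is a homeomorphism whose inverse $\phi\inv$ is itself affine: were $\phi(C)$ properly contained in a larger cylinder $C'$, then $C = \phi\inv(\phi(C))$ would be properly contained in the cylinder $\phi\inv(C')$, contradicting the maximality of $C$.

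Next I would check that the area is preserved. Under either hypothesis we have $\der\,\phi \in \Lie{SL}_2(\R)$: when $X$ has finite area this is the area-preservation observation recorded just before Lemma~\ref{L:2.1}, and when $X$ is totally bounded it is precisely Lemma~\ref{L:2.1}. Since $\det\,\der\,\phi = 1$, the map $\phi$ scales every area by $|\det\,\der\,\phi| = 1$, so $\mathrm{Area}(\phi(C)) = \mathrm{Area}(C) = A$. Combining this with the first step yields $\phi(C) \in \mathcal{C}(A)$, i.e.\ $\Aff^+(X)$ preserves $\mathcal{C}(A)$.

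Finally, for $\mathcal{V}(A)$ I would use that holonomy transforms linearly under the derivative: if $\gamma$ is the core curve of $C$ with holonomy vector $v$, then $\phi(\gamma)$ is the core curve of $\phi(C)$ and its holonomy vector is $(\der\,\phi)\,v$. Since $\phi(C) \in \mathcal{C}(A)$ by the preceding steps, $(\der\,\phi)\,v \in \mathcal{V}(A)$; as every element of $\Gamma(X)$ arises as $\der\,\phi$ for some $\phi \in \Aff^+(X)$, this shows $\Gamma(X)$ preserves $\mathcal{V}(A)$. I expect the only point requiring care to be the maximality claim in the first step, as it is the one place where being a genuine affine homeomorphism, rather than merely a local affine map, is essential; everything else follows directly from the linearity of the derivative together with Lemma~\ref{L:2.1}.
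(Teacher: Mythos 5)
Your proof is correct and follows essentially the same route as the paper: affine maps send cylinders to cylinders, area is preserved by Lemma~\ref{L:2.1} (or the finite-area observation), and holonomy vectors transform by the derivative. The only difference is the justification of maximality---the paper cites the fact that saddle connections are sent to saddle connections, while you pull a putative larger cylinder back through $\phi\inv$; your version is arguably the more robust of the two on infinite-type surfaces, where a maximal cylinder's boundary need not contain a saddle connection.
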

\begin{proof}
The affine image of a cylinder is a cylinder, and the maximality of a 
cylinder is preserved because saddle connections are sent to saddle 
connections by elements of $\Aff^+(X)$. We have already observed that 
an affine self-homeomorphism of $X$ must preserve area, and so the first 
claim is proved. The second follows immediately.
\end{proof}

\begin{lemma}\label{L:2.6}
Let $X$ be a translation surface without ideal boundary, and let 
$C \subset X$ be a maximal cylinder of finite area. Suppose that $X$ 
is not a torus. Then the stabilizer $\Stab(C)$ of $C$ in $\Aff^+(X)$ 
is a cyclic group, hence discrete.
\end{lemma}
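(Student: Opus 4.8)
The plan is to reduce everything to the affine self-maps of the cylinder itself. First I would normalize by a rotation so that the core curves of $C$ are horizontal, writing $w$ for the circumference and $h$ for the (finite) height, so that $C \cong (\R/w\Z)\times(0,h)$. Restriction then gives a homomorphism from $\Stab(C)$ to the group of affine automorphisms of this cylinder that permute its boundary singularities, and I would first check that this homomorphism is injective: since $X$ is connected, an element of $\Aff^+(X)$ is determined by its derivative together with its restriction to any nonempty open set, so an element fixing $C$ pointwise has derivative $\id$ and therefore equals $\id_X$.

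Next I would pin down the derivatives. Any $\phi\in\Stab(C)$ sends core curves to core curves, so $\der\,\phi$ preserves the horizontal direction and is upper triangular; and since $\phi$ maps the finite-area cylinder $C$ bijectively onto itself, $|\det\der\,\phi|=1$. Combined with orientation-preservation this forces $\der\,\phi = \pm\bigl(\begin{smallmatrix}1&b\\0&1\end{smallmatrix}\bigr)$ for some $b\in\R$, so $\der(\Stab(C))$ lies inside the one-parameter parabolic subgroup of shears along the core direction (together with $-\id$). In these coordinates an element preserving each of the two boundary components acts on $C$ by $(x,y)\mapsto(x+by+c,\,y)$ for some constant $c\in\R/w\Z$.

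The heart of the argument, and where the hypothesis that $X$ is not a torus enters, is to discretize the shear $b$. Because $C$ is maximal on a surface without ideal boundary and $X$ is not a torus, each boundary component of $C$ must contain a singular point (a smooth boundary geodesic could be crossed, contradicting maximality); let $P$ and $Q$ be the finite, nonempty sets of singular points on the bottom and top circles. Every $\phi$ permutes $P$ and $Q$ and preserves the lengths of horizontal saddle connections; reading off its action on the bottom shows $c$ lies in the finite subgroup of $\R/w\Z$ stabilizing $P$, while reading it off on the top shows $bh+c$ lies in the finite subgroup stabilizing $Q$. Hence $bh$ is confined to finitely many residues modulo $w$, so the realized shears $b$ form a discrete subgroup of $\R$, necessarily infinite cyclic (or trivial) and generated by a single minimal twist.

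Finally I would assemble the pieces. The index-at-most-two subgroup of $\Stab(C)$ preserving the two boundary components maps under $\der$ onto this cyclic shear group, and I would argue that $\der$ is injective on $\Stab(C)$ --- equivalently, that no nontrivial translation automorphism of $X$ fixes $C$ --- so that this subgroup is itself infinite cyclic, generated by the minimal twist; incorporating the possible orientation-preserving flip that exchanges top and bottom (which has derivative $-\id$ times a shear and whose square is a twist) then exhibits $\Stab(C)$ as cyclic, hence discrete. The hard part will be this last injectivity/kernel analysis: a derivative-$\id$ automorphism preserving $C$ acts there by a rotation lying in the finite stabilizer of $P\cup Q$, so the kernel is at worst finite, but ensuring that such symmetries do not obstruct cyclicity (so that the group is genuinely cyclic rather than merely discrete and virtually cyclic) is the delicate point, and is precisely what must be nailed down to obtain the stated conclusion.
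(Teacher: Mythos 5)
Your argument is essentially the paper's: both proofs use the maximality of $C$, the emptiness of the ideal boundary, and the non-torus hypothesis to place saddle connections (singular points) on each boundary circle of $C$, pass to a finite-index subgroup of $\Stab(C)$ fixing this boundary data, and identify that subgroup with the group of powers of the full Dehn twist in $C$. The residual issue you flag at the end --- that this a priori yields only a virtually cyclic (hence discrete) group rather than a literally cyclic one --- is present in the paper's own proof as well, which likewise only passes to a finite-index subgroup isomorphic to a subgroup of $\Z$; discreteness is all that is used in the proof of Theorem~\ref{T:2}.
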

\begin{proof}
Because $C$ has finite area and $X$ is not a torus, $C$ has an ideal 
boundary. Because the ideal boundary of $X$ is empty, $X$ does not consist 
only of $C$. Therefore each boundary component of $C$ contains a saddle 
connection; call these $I_1$ and $I_2$. Any element of $\Stab(C)$ must 
also preserve the lengths of $I_1$ and $I_2$. Because the boundary of $C$ 
has finite length, we may assume, up to taking a finite index subgroup, 
that every element in $\Stab(C)$ fixes $I_1$ and $I_2$. But this implies 
that every element of $\Stab(C)$ fixes the entire boundary of $C$ and thus 
is a power of a full Dehn twist in $C$. Ergo $\Stab(C)$ is isomorphic to a 
subgroup of $\Z$, from which the result follows.
\end{proof}

\begin{proof}[Proof of first part of Theorem~\ref{T:2}]
The result is already known if $X$ is a torus, because then the Veech group 
is (conjugate to) $\Lie{SL}_2(\Z)$, so assume this is not the case.

Let $\gamma$ be a periodic trajectory on $X$. Then the image of $\gamma$ 
is contained in some maximal cylinder. This cylinder must have some finite 
area $A$: this is immediate if the area of $X$ is finite, and if $X$ is 
totally bounded, it follows because the points of the cylinder must be a 
bounded distance apart. Thus the set $\mathcal{V}(A)$ is non-empty. By 
Lemma~\ref{L:2.4}, it is therefore discrete. By Lemma~\ref{L:2.5}, 
$\Gamma(X)$ acts on $\mathcal{V}(A)$. So it suffices to show that the 
stabilizer inside $\Gamma(X)$ of a point $v \in \mathcal{V}(A)$ is discrete 
in $\Lie{SL}_2(\R)$. To see this, we observe that, up to taking a finite 
index subgroup, the stabilizer of $v$ in $\Gamma(X)$ may be identified with 
the stabilizer inside $\Aff^+(X)$ of some cylinder in $\mathcal{C}(A)$. 
Lemma~\ref{L:2.6} now implies the desired result.
\end{proof}

To prove the second part of Theorem~\ref{T:2}, we again turn to examples.

\begin{example}
Let $L$ be an ``irrational'' rhombus, meaning its angles are not rational 
multiples of $\pi$. The surface $X_L$ obtained by unfolding $L$ is such 
that $\clos{X}_L \setminus X_L$ consists of four points, arising from the 
vertices of $L$. $X_L$ is therefore bounded. Its Veech group, however, is 
an indiscrete subgroup of $\Lie{SO}(2)$, generated by rotations through the 
angles of $L$.
\end{example}

\begin{example}
The infinite cylinder of Example~\ref{Ex:anarbd} has finite analytic type. 
This surface has one homotopy class of periodic trajectories; these are 
the images of vertical lines in the plane under the universal covering map 
$\zeta \mapsto e^\zeta$ from $\C$ to $\C^*$, which is made into a 
translation covering by taking the differential $\D\zeta$ on the domain. 
The parabolic map $(x,y) \mapsto (x,y+tx)$ of $\R^2 \cong \C$ is affine 
with respect to $\D\zeta$ for any $t \in \R$, and it descends to $\C^*$ as 
an affine map with respect to $\D{z}/z$, acting as a Dehn twist on each 
annulus $\{ 2\pi k \le t \log|z| \le 2\pi(k+1) \}$, $k \in \Z$. The Veech 
group of $(\C^*,\D{z}/z)$ therefore contains a copy of $\R$, and so it is 
not discrete.
\end{example}

\begin{remark*}
It is still not known whether a surface of infinite genus that has finite 
area or is totally bounded can have a lattice Veech group---in particular, 
whether the Veech group of such a surface can be co-compact.
\end{remark*}

\subsection*{Acknowledgements}

The author wishes to thank the Hausdorff Research Institute for 
Mathematics in Bonn as well as the organizers of the trimester program 
``Geometry and Dynamics of Teichm{\"u}ller Spaces'', where much of 
this work was carried out. Thanks also to Pascal Hubert, Gabriela 
Schmith{\"u}sen, and Ferr{\'a}n Valdez for helpful conversations 
and feedback, and to the referee for useful suggestions.

\bibliographystyle{math}

\end{document}